\theoremstyle{plain}
 \newtheorem{thm}{Theorem}[section]
 \newtheorem{cor}[thm]{Corollary}
\theoremstyle{definition}
 \newtheorem{dfn}[thm]{Definition}
 \newtheorem{rem}[thm]{Remark}
 \numberwithin{equation}{section}
\theoremstyle{definition}
\theoremstyle{remark}
 \numberwithin{equation}{section}
\renewcommand{\le}{\leqslant}\renewcommand{\leq}{\leqslant}
\renewcommand{\ge}{\geqslant}\renewcommand{\geq}{\geqslant}
\newcommand{\bbC}{\mathbb{C}}
\newcommand{\bbF}{\mathbb{F}}
\renewcommand{\and}{\quad \mbox{and} \quad}  
\renewcommand{\le}{\leqslant}\renewcommand{\leq}{\leqslant}
\renewcommand{\ge}{\geqslant}\renewcommand{\geq}{\geqslant}
\title{Lamprecht-Tate Formula}
\subjclass[2010]{11S37 (11F70).}
\keywords{Local fields, characters, epsilon factors, conductor}
\author[Biswas]{\bfseries Sazzad Ali Biswas}
\address{
Einstein Institute of Mathematics\\ 
Hebrew University of Jerusalem\\ 
Givat Ram, Jerusalem 91904, Israel}
\email{sazzad.biswas@mail.huji.ac.il, sazzad.jumath@gmail.com}
\begin{document}

\vspace{10mm}
\setcounter{page}{1}
\thispagestyle{empty}

\begin{abstract}
For multiplicative characters of a non-Archimedean local field, we have a formula for epsilon factors
due to John Tate. Before Tate, Erich Lamprecht
also gave a formula for local epsilon factors of linear characters. In \cite{JT1}, Tate generalizes the formula for 
epsilon factors. In this paper, we give a very short and neat proof of the Lamprecht-Tate formula.
We also show that the 
famous twisting formula of Deligne is a special case of the Lamprecht-Tate formula.
\end{abstract}

\maketitle

\section{\textbf{Introduction}}

By Langlands we can associate a local epsilon factor (also known as local constant) with each finite dimensional
continuous complex representation $\rho$ of the absolute Galois group $G_F$ of a non-Archimedean local field $F$. 
But there is 
no explicit formula for the local epsilon of an arbitrary local Galois representation. 
In 1952 \cite{EL}, Erich Lamprecht gave a formula (cf. Lemma 8.1 on p. 60 of \cite{RL})
for the local epsilon factor of a linear character
$\chi:F^\times\to\bbC^\times$. Later, John Tate gave a more explicit formula (cf. \cite{JT1}, p. 94 and Proposition 1) 
for the local epsilon
factor of a linear character. We call this formula the {\bf Lamprecht-Tate} formula.
In this paper, we give a very short and neat proof of this Lamprecht-Tate formula (cf. Theorem \ref{Theorem 6.1.1}).
We also show that Tate's
formula (\ref{eqn 3.11}) and Lamprecht's formula (cf. Corollary \ref{Corollary 6.1.2}(1)) 
are special cases of the Lamprecht-Tate formula (\ref{eqn 6.0.9}). 

Furthermore, we also show that the famous Deligne's twisting formula (cf. Corollary \ref{Corollary 6.1.2}(2))
of local epsilon factors is a corollary
of Theorem \ref{Theorem 6.1.1}.

\section{\textbf{Notation and Preliminaries}}

Let $F$  be a non-Archimedean local field of characteristic zero
, i.e., a finite extension of the field $\mathbb{Q}_p$ (field of $p$-adic numbers),
where $p$ is a prime.
Let $O_F$ be the 
ring of integers of $F$ and let $P_F=\pi_F O_F$ be the unique maximal ideal in $O_F$ 
and $\pi_F$ a uniformizer, i.e., a generator of $P_F$.
We denote by $q_F$ the cardinality of the residue field $k_F=O_F/P_F$ of $F$.
Let $U_F=O_F-P_F$ be the group of units in $O_F$.
Let $P_{F}^{i}=\{x\in F:\nu_F(x)\geq i\}$ and for $i\geq 0$ define $U_{F}^{i}=1+P_{F}^{i}$
(with the proviso that $U_{F}^{0}=U_F=O_{F}^{\times}$).

\begin{dfn}[\textbf{Different and Discriminant}] 
 Let $K/F$ be a finite separable extension of a non-Archimedean local field $F$. 
 We define the \textbf{inverse different (or codifferent)}
 $\mathcal{D}_{K/F}^{-1}$ of $K$ over $F$ to be $\pi_{K}^{-d_{K/F}}O_K$, where $d_{K/F}$ is the largest integer such that 
 \begin{center}
  $\mathrm{Tr}_{K/F}(\pi_{K}^{-d_{K/F}}O_K)\subseteq O_F$,
 \end{center}
 where $\rm{Tr}_{K/F}$ is the trace map from $K$ to $F$.
Then the \textbf{different} is defined by:
\begin{center}
 $\mathcal{D}_{K/F}=\pi_{K}^{d_{K/F}}O_K$
\end{center}
and the \textbf{discriminant} $D_{K/F}$ is 
\begin{center}
 $D_{K/F}=N_{K/F}(\pi_{K}^{d_{K/F}})O_F$.
\end{center}
\end{dfn}

\begin{dfn}[\textbf{Conductor of characters}]
 
The conductor of any nontrivial additive character $\psi$ of $F$  is the largest integer $n(\psi)$ such that 
$\psi$ is trivial
on $P_{F}^{-n(\psi)}$, but nontrivial on $P_{F}^{-n(\psi)-1}$. 
We also consider the conductor $a(\chi)$ of a 
 nontrivial character $\chi: F^\times\to \mathbb{C}^\times$, i.e., the smallest integer $m\geq 0$ such 
 that $\chi$ is trivial
 on $U_{F}^{m}$. We say $\chi$ is {\bf unramified} if the conductor of $\chi$ is {\bf zero} and otherwise {\bf ramified}.
  We also recall here that for two characters $\chi_1$ and $\chi_2$ of $F^\times$ we have 
 $a(\chi_1\chi_2)\leq\mathrm{max}(a(\chi_1),a(\chi_2))$ with equality if $a(\chi_1)\neq a(\chi_2)$. 
\end{dfn}

\subsection{\textbf{Classical Gauss sums}}

Let $k_q$ be a finite field. Let $p$ be the characteristic of $k_q$; then the prime  field 
contained in $k_q$ is $k_p$, the finite field of order $p$.
The structure of the {\bf canonical} additive character $\psi_q$ of $k_q$ is the same as the structure of the canonical
character
$\psi_F$ of $F$ (cf. \cite{JT1}, p. 92), namely the composite of the trace map with the canonical character of the base field, i.e., 
\begin{center}
 $\psi_q=\psi_p\circ \rm{Tr}_{k_q/k_p}$,
\end{center}
where 
\begin{center}
 $\psi_p(x):=e^{\frac{2\pi i x}{p}}$ \hspace{.3cm} for all $x\in k_p$.
\end{center}

\textbf{Gauss sums:} Let $\chi$ be a multiplicative and $\psi$ an additive character of $k_q$. 
Then the Gauss sum $G(\chi,\psi)$ is defined
by 
\begin{equation}
 G(\chi,\psi)=\sum_{x\in k_{q}^{\times}}\chi(x)\psi(x).
\end{equation}
In the following theorem we recall a known explicit formula for the Gauss sum associated with 
the nontrivial quadratic character
of $k_q^\times$.
\begin{thm}[\cite{LN}, p. 199, Theorem 5.15]\label{Theorem 3.5}
Let $k_q$ be a finite field with $q=p^s$ elements, where $p$ is an odd prime and $s\in\mathbb{N}$.
Let $\chi$ be the nontrivial quadratic character of 
$k_q$ and let $\psi$ be the canonical additive character of $k_q$. Then
\begin{equation}
 G(\chi,\psi)=\begin{cases}
               (-1)^{s-1}q^{\frac{1}{2}} & \text{if $p\equiv 1\pmod{4}$},\\
               (-1)^{s-1}i^sq^{\frac{1}{2}} & \text{if $p\equiv 3\pmod{4}$}.
              \end{cases}
\end{equation}
\end{thm}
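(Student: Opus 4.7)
The plan is to reduce to the prime field and then lift via Hasse--Davenport. Write $\chi_p$ for the quadratic character of $k_p$ and $\psi_p$ for the canonical additive character defined in the excerpt. Since the norm map $N_{k_q/k_p}\colon k_q^\times \to k_p^\times$ is surjective, the composite $\chi_p\circ N_{k_q/k_p}$ is a nontrivial character of order $2$, hence equals the quadratic character $\chi$ of $k_q$. Together with the defining formula $\psi=\psi_p\circ\mathrm{Tr}_{k_q/k_p}$, this puts $G(\chi,\psi)$ in exactly the shape for which the Hasse--Davenport lifting relation applies.

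First I would dispose of the prime-field case $s=1$, i.e.\ establish
\[
 G(\chi_p,\psi_p)=\begin{cases} \sqrt{p} & p\equiv 1\pmod 4,\\ i\sqrt{p} & p\equiv 3\pmod 4.\end{cases}
\]
The square $|G(\chi_p,\psi_p)|^2=p$ is the routine orthogonality computation: expand $G(\chi_p,\psi_p)\overline{G(\chi_p,\psi_p)}$, change variables $y=x/x'$, and use $\sum_{z\in k_p}\psi_p(x'(y-1)z)=p\cdot\mathbf{1}_{y=1}$. Combined with the fact that $\chi_p(-1)=+1$ when $p\equiv 1\pmod 4$ and $-1$ when $p\equiv 3\pmod 4$ (a consequence of Euler's criterion applied to $-1$), this pins $G(\chi_p,\psi_p)^2$ down to $p$ or $-p$ respectively, so the Gauss sum equals $\pm\sqrt p$ or $\pm i\sqrt p$. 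The genuinely hard step is the determination of the sign, i.e.\ showing the $+$ sign. This is Gauss's original sign theorem; I would follow one of the standard routes---either Schur's eigenvalue argument for the discrete Fourier transform matrix restricted to $k_p$, or the Kronecker/Dirichlet evaluation via the functional equation of a theta series---rather than trying to massage the sum directly.

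Having settled the prime-field case, I would invoke the Hasse--Davenport lifting identity
\[
 -G\bigl(\chi_p\circ N_{k_q/k_p},\;\psi_p\circ \mathrm{Tr}_{k_q/k_p}\bigr)=\bigl(-G(\chi_p,\psi_p)\bigr)^{s}.
\]
Substituting the identifications above gives
\[
 G(\chi,\psi)=(-1)^{s-1}\,G(\chi_p,\psi_p)^{s}.
\]
Inserting the two cases from the prime-field evaluation now yields the claimed formulas: when $p\equiv 1\pmod 4$ we get $(-1)^{s-1}(\sqrt p)^{s}=(-1)^{s-1}q^{1/2}$, and when $p\equiv 3\pmod 4$ we get $(-1)^{s-1}(i\sqrt p)^{s}=(-1)^{s-1}i^{s}q^{1/2}$.

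The main obstacle is clearly the sign determination in the prime-field step; everything else (the absolute value, the parity of $\chi_p(-1)$, and the Hasse--Davenport transition) is formal. If one were willing to quote Hasse--Davenport and Gauss's sign theorem as black boxes, the proof becomes a one-line assembly; the substantive content of Theorem~\ref{Theorem 3.5} lies entirely inside those two inputs.
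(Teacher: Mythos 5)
The paper gives no proof of this statement; it is quoted verbatim from Lidl--Niederreiter (Theorem 5.15, p.~199), and your argument is correct and is essentially the proof given there: the cited reference likewise establishes the prime-field case $G(\chi_p,\psi_p)=\sqrt p$ or $i\sqrt p$ via Gauss's sign theorem and then lifts to $k_q$ by the Davenport--Hasse relation, using that $\chi_p\circ N_{k_q/k_p}$ is the quadratic character of $k_q$ and $\psi_p\circ\mathrm{Tr}_{k_q/k_p}$ is the canonical additive character. Your assembly of the two black boxes, including the sign bookkeeping $G(\chi,\psi)=(-1)^{s-1}G(\chi_p,\psi_p)^s$, is accurate, so there is nothing to correct.
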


\subsection{\textbf{Epsilon factors}}

For a nontrivial multiplicative character $\chi$ of $F^\times$ and a nontrivial additive character $\psi$ of $F$,
we have (cf. \cite{RL}, p. 5)
\begin{equation}\label{label1}
 \epsilon(\chi,\psi)=\chi(c)\frac{\int_{U_F}\chi^{-1}(x)\psi(x/c) dx}{|\int_{U_F}\chi^{-1}(x)\psi(x/c) dx|},
\end{equation}
where the Haar measure $dx$ of $F$ is normalized so that the normalized Haar measure of $O_F$ is $1$, and 
 $c\in F^\times$ is an element with valuation $n(\psi)+a(\chi)$.
Due to Tate (cf. p. 94 of \cite{JT1}), we can modify the above integral formula (\ref{label1}) as follows:
\begin{equation}\label{eqn 3.11}
 \epsilon(\chi,\psi)=\chi(c)q_F^{-\frac{a(\chi)}{2}}\sum_{x\in\frac{U_F}{U_{F}^{a(\chi)}}}\chi^{-1}(x)\psi(x/c),
\end{equation}
where $c=\pi_{F}^{a(\chi)+n(\psi)}$. We call the equation (\ref{eqn 3.11}) as {\bf Tate's formula.}

\section{{\bf Lamprecht-Tate formula for epsilon factors}}

\begin{thm}[{\bf Lamprecht-Tate formula, Proposition 1 of \cite{JT1}}]\label{Theorem 6.1.1}
Let $F$ be a non-Archimedean local field.
Let $\chi$ be a character of $F^\times$ of conductor $a(\chi)$ and let $m$ be a natural number 
such that $2m\le a(\chi)$. Let $\psi$ be a nontrivial additive character of $F$. 
Then there exists $c\in F^\times$, with valuation $\nu_F(c)=a(\chi)+n(\psi)$, such that 
\begin{equation}\label{eqn 5.4.5}
 \chi(1+y)=\psi(c^{-1}y)\qquad\text{for all $y\in P_{F}^{a(\chi)-m}$},
\end{equation}
and for such a $c$ we have:
\begin{equation}\label{eqn 6.0.9}
 \epsilon(\chi,\psi)=\chi(c)\cdot q_{F}^{-\frac{(a(\chi)-2m)}{2}}
 \sum_{x\in U_F^m/U_F^{a(\chi)-m}}\chi^{-1}(x)\psi(c^{-1}x).
\end{equation}
{\bf Remark:} Note that the assumption (\ref{eqn 5.4.5}) is obviously fulfilled for $m=0$ because then both sides are
equal to $1$,
and the resulting formula for $m=0$ is Tate's formula (\ref{eqn 3.11}).
\end{thm}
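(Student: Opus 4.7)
The plan is to first produce the element $c$ via a duality argument, and then to rewrite Tate's summation formula~(\ref{eqn 3.11}) by splitting its range of summation using the subgroup $U_F^{a(\chi)-m}/U_F^{a(\chi)}$ and collapsing the resulting inner sums by means of the identity~(\ref{eqn 5.4.5}).

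For the existence of $c$, the hypothesis $2m\le a(\chi)$ gives $2(a(\chi)-m)\ge a(\chi)$, so for any $y,y'\in P_F^{a(\chi)-m}$ the cross term $yy'$ lies in $P_F^{a(\chi)}$; hence $1+y\mapsto y+P_F^{a(\chi)}$ is a group isomorphism $U_F^{a(\chi)-m}/U_F^{a(\chi)}\xrightarrow{\sim} P_F^{a(\chi)-m}/P_F^{a(\chi)}$. Under this linearisation the restriction of $\chi$ becomes an additive character $\lambda$ of $P_F^{a(\chi)-m}/P_F^{a(\chi)}$. By the self-duality of $F$ under $\psi_F$, every such $\lambda$ has the form $y\mapsto\psi_F(\alpha y)$; well-definedness modulo $P_F^{a(\chi)}$ forces $\nu_F(\alpha)\ge-a(\chi)-d_{F/\bbQ_p}$, while the fact that $\chi$ has conductor exactly $a(\chi)$ (so $\lambda$ is non-trivial on $P_F^{a(\chi)-1}$ when $m\ge 1$) gives the reverse inequality. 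Thus $c:=\alpha^{-1}$ has $\nu_F(c)=a(\chi)+d_{F/\bbQ_p}$ and satisfies~(\ref{eqn 5.4.5}); the case $m=0$ is trivial.

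Given $c$, I will derive~(\ref{eqn 6.0.9}) from~(\ref{eqn 3.11}) by decomposing every $x\in U_F/U_F^{a(\chi)}$ uniquely as $x=x_1(1+y)$ with $x_1$ running over representatives of $U_F/U_F^{a(\chi)-m}$ and $y\in P_F^{a(\chi)-m}/P_F^{a(\chi)}$. Since $\chi^{-1}(x)=\chi^{-1}(x_1)\psi_F(-c^{-1}y)$ by~(\ref{eqn 5.4.5}) and $\psi_F(c^{-1}x)=\psi_F(c^{-1}x_1)\psi_F(c^{-1}x_1 y)$, the inner sum over $y$ becomes
\[ \sum_{y\in P_F^{a(\chi)-m}/P_F^{a(\chi)}} \psi_F\!\bigl(c^{-1}(x_1-1)y\bigr), \]
an orthogonality sum for an additive character on a group of order $q_F^m$. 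Using $\nu_F(c)=a(\chi)+n(\psi_F)$, a direct valuation check shows this character is trivial precisely when $\nu_F(x_1-1)\ge m$, i.e.\ $x_1\in 1+P_F^m$, in which case the sum equals $q_F^m$. Substituting back and combining $q_F^{-a(\chi)/2}\cdot q_F^m=q_F^{-(a(\chi)-2m)/2}$ then yields~(\ref{eqn 6.0.9}) on the nose.

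The genuinely non-routine step is the first one: one must recognise that the ``depth greater than half the conductor'' hypothesis is exactly what allows $\chi$ to be linearised on $U_F^{a(\chi)-m}$, and that the canonical additive character $\psi_F$ is the correct dual object to describe the resulting character, so that the valuation of $c$ is forced and matches the one appearing in Tate's formula. After that, the rest of the argument is pure orthogonality of additive characters together with careful bookkeeping of the quotient groups.
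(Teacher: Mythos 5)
Your proposal is correct and follows essentially the same route as the paper: linearise $\chi$ on $U_F^{a(\chi)-m}$ using $2(a(\chi)-m)\ge a(\chi)$, realise the resulting additive character via $\psi_F$ to produce $c$ with $\nu_F(c)=a(\chi)+d_{F/\bbQ_p}$, then split Tate's sum over $x=x_1(1+y)$ and kill the inner sum by orthogonality unless $x_1\in 1+P_F^m$. The only cosmetic difference is that you pin down $\nu_F(c)$ by a direct duality computation on the finite quotient $P_F^{a(\chi)-m}/P_F^{a(\chi)}$, whereas the paper extends the character to $F^{+}$ and compares conductors; both arguments are equivalent.
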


\begin{proof}
When $m=0$, the formula (\ref{eqn 6.0.9}) is the same as the Tate's formula (\ref{eqn 3.11}).
In general, the assumption $2m\le a(\chi)$ implies 
 $2(a(\chi)-m)\ge a(\chi)$ and therefore 
 $$\chi(1+y)\chi(1+y')=\chi(1+y+y')$$
 for $y,y'\in P_{F}^{a(\chi)-m}$. That is, $y\mapsto \chi(1+y)$ is a character of the additive group $P_F^{a(\chi)-m}$.
 This character extends to an additive character of the field $F$ and, by local additive duality, there is some $c\in F^\times$ such that
 $$\chi(1+y)=\psi(c^{-1}y)=(c^{-1}\psi)(y),\quad\text{for all $y\in P_F^{a(\chi)-m}$}.$$
 Now comparing the conductors of both sides we must have:
 $$a(\chi)=-n(c^{-1}\psi)=\nu_F(c)-n(\psi),$$
 hence $\nu_F(c)=a(\chi)+n(\psi)$ is the right assumption for our formula.\\
 Now we assume $m\ge 1$ (the case $m=0$ we have checked already) and consider the filtration 
 $$O_F^\times\supseteq 1+P_F^{a(\chi)-m}\supseteq 1+P_{F}^{a(\chi)}.$$
 Then we may represent every $x\in O_F^\times/(1+P_F^{a(\chi)})$ as $x=z(1+y)$, where $y\in P_F^{a(\chi)-m}$ and $z$ runs over the 
 system of representatives for $O_F^\times/(1+P_F^{a(\chi)-m})$. Now for giving explicit formula for the epsilon 
 factor $\epsilon(\chi,\psi)$, 
 we have to consider the sum 
 \begin{equation}\label{eqn 5.4.6}
  \sum_{x\in O_F^\times/(1+P_F^{a(\chi)})}\chi^{-1}(x)\psi(c^{-1}x)=\sum_{z\in O_F^\times/(1+P_F^{a(\chi)-m})}
  \sum_{y\in P_F^{a(\chi)-m}/P_{F}^{a(\chi)}}\chi^{-1}(z(1+y))\psi(c^{-1}z(1+y)).
 \end{equation}
Using (\ref{eqn 5.4.5}) we obtain
$$\chi^{-1}(z(1+y))=\chi^{-1}(z)\chi^{-1}(1+y)=\chi^{-1}(z)\chi(1-y)=\chi^{-1}(z)\psi(-c^{-1}y)$$
 and therefore our double sum (\ref{eqn 5.4.6}) may be rewritten as 
 \begin{equation*}
  \sum_{z\in O_F^\times/(1+P_F^{a(\chi)-m})}\chi^{-1}(z)\psi(c^{-1}z)\cdot
  \left(\sum_{y\in P_F^{a(\chi)-m}/P_F^{a(\chi)}}\psi(c^{-1}y(z-1))\right).
 \end{equation*}
But the inner sum is the sum over the additive group $P_F^{a(\chi)-m}/P_F^{a(\chi)}$ and 
$(c^{-1}(z-1))\psi$ is a character of that group. Hence this sum is equal to $[P_F^{a(\chi)-m}:P_F^{a(\chi)}]=q_{F}^{m}$
if the character is trivial and otherwise the sum will be zero. But:
$$n(c^{-1}(z-1)\psi)=\nu_F(c^{-1}(z-1))+n(\psi)=-a(\chi)+\nu_F(z-1).$$
So that the character $(c^{-1}(z-1))\psi$ is trivial on $P_F^{a(\chi)-\nu_F(z-1)}$, and therefore it will be $\equiv1$
on $P_{F}^{a(\chi)-m}$ if and only if $\nu_F(z-1)\ge m$, i.e., $z=1+y'\in 1+P_F^m$. Therefore our sum (\ref{eqn 5.4.6})
rewrites as 
  \begin{equation}\label{eqn 5.4.7}
  \sum_{x\in O_F^\times/(1+P_F^{a(\chi)})}\chi^{-1}(x)\psi(c^{-1}x)=
  q_F^m \sum_{z\in (1+P_F^m)/(1+P_F^{a(\chi)-m})}\chi^{-1}(z)\psi(c^{-1}z).
 \end{equation}
 And substituting this result into the Tate's formula (\ref{eqn 3.11}) we get 
 \begin{align*}
 \epsilon(\chi,\psi)
  &=\chi(c)q_F^{-\frac{a(\chi)}{2}}\sum_{x\in O_F^\times/(1+P_F^{a(\chi)})}\chi^{-1}(x)\psi(c^{-1}x)\\
  &=\chi(c)\cdot q_{F}^{-\frac{(a(\chi)-2m)}{2}}\sum_{z\in (1+P_F^m)/(1+P_F^{a(\chi)-m})}\chi^{-1}(z)\psi(c^{-1}z)\\
  &=\chi(c)\cdot q_{F}^{-\frac{(a(\chi)-2m)}{2}}\sum_{x\in (1+P_F^m)/(1+P_F^{a(\chi)-m})}\chi^{-1}(x)\psi(c^{-1}x). 
 \end{align*}

\end{proof}

\begin{cor}\label{Corollary 6.1.2}
\begin{enumerate}
\item{\bf Lamprecht Formulas (cf. Lemma 8.1, p.60 of \cite{RL})}
Let $\chi$ be a character of $F^\times$. Let $\psi$ be a nontrivial additive character of $F$.
\begin{enumerate}
 \item When $a(\chi)=2d\, (d\ge 1)$, we have 
$$\epsilon(\chi,\psi)=\chi(c)\psi(c^{-1}).$$
\item When $a(\chi)=2d+1\, (d\ge1)$, we have 
$$\epsilon(\chi,\psi)=\chi(c)\psi(c^{-1})\cdot q_F^{-\frac{1}{2}}\sum_{x\in P_F^d/P_F^{d+1}}\chi^{-1}(1+x)\psi(c^{-1}x).$$
\end{enumerate}
\item {\bf Deligne's twisting formula (cf. \cite{D1}, Lemma 4.16):}
If $\alpha$ and $\beta$ are characters of $F^\times$ that satisfy $a(\alpha)\ge 2\cdot a(\beta)$, then 
\begin{equation}\label{Deligne's twisting formula}
 \epsilon(\alpha\beta,\psi)=\beta(c)\cdot \epsilon(\alpha,\psi).
\end{equation}
\end{enumerate}
Here $c\in F^\times$ is an element with valuation $\nu_F(c)=a(\chi)+n(\psi)$,
and which satisfies the additional condition that 
\begin{center}
 $\chi(1+x)=\psi(\frac{x}{c})$ for all $x\in F^\times$ with $2\cdot\nu_F(x)\ge a(\chi)$
\end{center}
in case (1(a)) and case (1(b)), and 
$$\alpha(1+x)=\psi(x/c)\qquad \text{for all $x\in F^\times$ with $2\cdot\nu_F(x)\ge a(\alpha)$}$$
in case (2).
\end{cor}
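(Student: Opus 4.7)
I would specialise Theorem \ref{Theorem 6.1.1} to an appropriate value of $m$ in each of the three statements. For Part~(1a), with $a(\chi)=2d$, I take $m=d$, which saturates the constraint $2m\le a(\chi)$. Then $a(\chi)-m=d=m$, so the index set $(1+P_F^m)/(1+P_F^{a(\chi)-m})$ is trivial and the sum in (\ref{eqn 6.0.9}) collapses to the single term $x=1$; the prefactor $q_F^{-(a(\chi)-2m)/2}$ equals $1$, and the formula reduces directly to $\epsilon(\chi,\psi_F)=\chi(c)\psi_F(c^{-1})$.

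For Part~(1b), with $a(\chi)=2d+1$, I again take $m=d$; now $a(\chi)-m=d+1$ and the prefactor is $q_F^{-1/2}$. Theorem \ref{Theorem 6.1.1} then produces a sum over $(1+P_F^d)/(1+P_F^{d+1})$; reparametrising $x=1+y$ with $y$ running through $P_F^d/P_F^{d+1}$ and factoring $\psi_F(c^{-1})$ out via $\psi_F(c^{-1}(1+y))=\psi_F(c^{-1})\psi_F(c^{-1}y)$ yields the stated Lamprecht formula. The passage from $\psi_F$ to an arbitrary nontrivial $\psi$ is routine: writing $\psi=b\cdot\psi_F$ with $\nu_F(b)=n(\psi)-n(\psi_F)$ and tracking the effect on $c$ through (\ref{eqn 5.4.5}) and (\ref{eqn 6.0.9}) adjusts the formulas in the stated way.

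For Part~(2), Deligne's twisting formula, I apply Theorem \ref{Theorem 6.1.1} twice with $m=a(\beta)$: once to $\alpha$ and once to $\chi:=\alpha\beta$. The hypothesis $a(\alpha)\ge 2a(\beta)$ immediately yields $2m\le a(\alpha)$, and it also forces $a(\alpha\beta)=a(\alpha)$ (since either $a(\beta)<a(\alpha)$, or $\beta$ is unramified). The main point to check — and what I expect to be the principal obstacle — is that the character-hypothesis (\ref{eqn 5.4.5}) stated for $\alpha$ transfers to $\alpha\beta$ with the same $c$. This follows from $a(\alpha)-a(\beta)\ge a(\beta)$: that inequality guarantees $\beta$ is trivial on $1+P_F^{a(\alpha)-a(\beta)}$, whence $(\alpha\beta)(1+y)=\alpha(1+y)=\psi_F(c^{-1}y)$ for $y\in P_F^{a(\alpha)-a(\beta)}$. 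The same trivialisation of $\beta$ on $1+P_F^{a(\beta)}$ means that on the summation range of (\ref{eqn 6.0.9}) one has $(\alpha\beta)^{-1}(x)=\alpha^{-1}(x)$, so the two applications of Theorem \ref{Theorem 6.1.1} produce identical inner sums and identical $q_F$-powers, and differ only in the prefactor $(\alpha\beta)(c)$ versus $\alpha(c)$; dividing gives $\epsilon(\alpha\beta,\psi_F)=\beta(c)\cdot\epsilon(\alpha,\psi_F)$.
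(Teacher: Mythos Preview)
Your proposal is correct and follows essentially the same route as the paper: in each of the three cases you specialise Theorem~\ref{Theorem 6.1.1} with $m=d$ (Parts~(1a) and~(1b)) or $m=a(\beta)$ (Part~(2)), and then read off the result. The only differences are cosmetic: you are slightly more explicit than the paper in verifying that the same $c$ satisfies condition~(\ref{eqn 5.4.5}) for both $\alpha$ and $\alpha\beta$, and you address the passage from the canonical $\psi_F$ to a general $\psi$, which the paper silently elides.
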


\begin{proof}

We deduce the three assertions from formula (\ref{eqn 6.0.9}).\\
{\bf (1(a)).} Assume $a(\chi)=2d$, where $d\ge 1$. In this case, we take $m=d$, and from equation (\ref{eqn 6.0.9}) we obtain
 \begin{equation}
  \epsilon(\chi,\psi)=\chi(c)\cdot \sum_{x\in (1+P_F^d)/(1+P_F^d)}\chi^{-1}(x)\psi(c^{-1}x)=\chi(c)\cdot\psi(c^{-1}).
 \end{equation}
{\bf (1(b)).} Assume $a(\chi)=2d+1$, where $d\ge 1$. In this case, we also take $m=d$, and then from equation (\ref{eqn 6.0.9}) we obtain
\begin{align*}
 \epsilon(\chi,\psi)
 &=\chi(c)\cdot q_F^{-\frac{1}{2}}\cdot\sum_{x\in(1+P_F^d)/(1+P_F^{d+1})}\chi^{-1}(x)\psi(c^{-1}x)\\
 &=\chi(c)\cdot\psi(c^{-1})\cdot q_F^{-\frac{1}{2}}\cdot\sum_{x\in P_F^d/P_F^{d+1}}\chi^{-1}(1+x)\psi(c^{-1}x).
\end{align*}
Here $c\in F^\times$ with $\nu_F(c)=a(\chi)+n(\psi)$ satisfies 
\begin{center}
 $\chi(1+x)=\psi(c^{-1}x)$ for all $x\in F^\times$ with $2\nu_F(x)\ge a(\chi)$.
\end{center}
 {\bf (2) Proof of Deligne's twisting formula:}
By the given assumption  
$a(\alpha)\ge 2 a(\beta)$, we have $a(\alpha\beta)=a(\alpha)$. 
Now take $m=a(\beta)$, then from equation (\ref{eqn 6.0.9}) we can write:
\begin{align*}
 \epsilon(\alpha\beta,\psi)
 &=\alpha\beta(c)\cdot q_{F}^{-\frac{(a(\alpha)-2m)}{2}}\cdot\sum_{x\in (1+P_F^m)/(1+P_F^{a(\chi)-m})}(\alpha\beta)^{-1}(x)\psi(\frac{x}{c})\\
 &=\beta(c)\cdot \alpha(c)q_F^{-\frac{(a(\alpha)-2m)}{2}}\sum_{x\in(1+P_F^m)/(1+P_F^{a(\alpha)-m})}\alpha^{-1}(x)\psi(\frac{x}{c})\\
 &=\beta(c)\cdot \epsilon(\alpha,\psi),
\end{align*}
since, as $a(\beta)=m$, we have $\beta(x)=1$ for all $x\in (1+P_F^m)/(1+P_F^{a(\alpha)-m})$.

\end{proof}

\begin{rem}\label{Remark Appendix}

Let $\mu_{p^\infty}$ denote the group of roots of unity of $p$-power order. 

Now we can give an explicit formula for $\epsilon(\chi,\psi)$ modulo $\mu_{p^\infty}$ (cf. the Mathscinet review of 
Henniart's paper \cite{GH} by E.-W. Zink)
and which is:
%
\begin{enumerate}
 \item $\epsilon(\chi,\psi)\equiv\chi(c)\mod{\mu_{p^\infty}}$ if $a(\chi)$ is even,
 \item $\epsilon(\chi,\psi)\equiv\chi(c)G(c)\mod{\mu_{p^\infty}}$ if $a(\chi)=2d+1\, (d\ge 1)$, where 
 $$G(c):=q_{F}^{-\frac{1}{2}}\cdot\sum_{x\in P_F^d/P_{F}^{d+1}}\psi(\frac{x^2}{2c})$$
 depends only on $c\in F^\times\mod{1+P_F}$.
 
\end{enumerate}

\end{rem}

\begin{rem}[{\bf On $G(c)$}]
Let $\bbF_q$ be a finite field of odd cardinal $q$. Let $\psi_0$ be a nontrivial additive character of $\bbF_q$.
Let $\eta$ be the unique nontrivial quadratic character of $\bbF_q^\times$.
Then we can write
\begin{equation}\label{eqn 6.1.11}
 \sum_{x\in \bbF_q}\psi_0(x^2)=\sum_{x\in\bbF_q^\times}\eta(x)\psi_0(x)=G(\eta,\psi_0).
\end{equation}
Now from Theorem \ref{Theorem 3.5} we can observe that $q^{-\frac{1}{2}}\sum_{x\in\bbF_q}\psi_0(x^2)$ is a {\bf fourth} root of unity.

Since $\nu_F(c)=a(\chi)+n(\psi)$, $P_F^d/P_F^{d+1}\cong k_{F}$ (here $d$ is same as in the previous remark),
we can write $G(c)$ as 
$$G(c)=q_F^{-\frac{1}{2}}\sum_{x\in k_F}\psi'(x^2)=q_F^{-\frac{1}{2}}G(\eta,\psi'),$$
where $\psi'$ is a certain nontrivial additive character of $k_F$ and $\eta$ is the unique nontrivial 
quadratic character of $k_F^\times$.
Therefore we can conclude that $G(c)$ is a fourth root of unity.

Furthermore, since $G(c)\cdot q_F^{\frac{1}{2}}=G(\eta,\psi')$ is a quadratic classical Gauss sum, from the 
computation of lambda function 
$\lambda_{K/F}(\psi):=\epsilon(Ind_{K/F}(1_K),\psi)$, where $K/F$ is a finite extension and 
$1_K$ is the trivial character of $K^\times$ 
 (via class field theory) (for details of lambda functions, see \cite{SAB1}). And when $K$ is a tamely quadratic ramified  
extension of $F$ we have explicit formula for $\lambda_{K/F}(\psi)$ ( see Theorem 1.1 of \cite{SAB2}) and which is a fourth 
root of unity. 
It can be seen that this $G(c)$ is actually
a $\lambda$-function for some quadratic tamely ramified local extension. Therefore explicit computation of $G(c)$ 
is equivalent to 
the computation of $\lambda$-factors for tamely quadratic ramified extensions $K/F$. 
 
\end{rem}




\begin{thebibliography}{99}

 
\bibitem{SAB1}Biswas, Computation of the Lambda function for a finite Galois
extension, Journal of Number Theory, Volume 186, May 2018, Pages 180-201,  \url{https://doi.org/10.1016/j.jnt.2017.09.025.}




\bibitem{SAB2}Biswas, S. A., Langlands lambda function for quadratic
tamely ramified extensions, Journal of Algebra and Its Applications
(2019) 1950132 (10 pages),
DOI: \url{10.1142/S0219498819501329}.




\bibitem{D1}Deligne, P.: Les constantes des \'{e}quations fonctionnelle des fonctions L, in Modular functions of 
one variable II, Lecture Notes in Mathematics {\bf 349}(1972), 501-597, Springer-Verlag, Berlin-Heidelberg-New York. 




 

\bibitem{GH}Henniart, G: Galois $\epsilon$-factors modulo roots of unity, Invent. Math. {\bf 78}, 117-126, (1984).

 
 \bibitem{EL}Lamprecht, E.: Allgemeine Theorie der Gauss'schen Summen in endlichen
 kommutativen Ringen, Math. Nach. {\bf 9}(1953). 
 
\bibitem{RL}Langlands, R.P.: On the functional equation of the Artin $L$-functions, unpublished article, 
\url{https://publications.ias.edu/sites/default/files/a-ps.pdf}.

\bibitem{LN} Lidl, R., Niederreiter, H.: Finite fields, Encyclopedia of Mathematics and its applications, 
Cambridge University press 2000.

\bibitem{JT1}Tate, J.: Local Constants, Algebraic Number Fields
(L-functions and Galois properties), Proceedings of Symposium, Edited by A. Fr\"{o}hlich, pp. 89-131.

 
 
 

 

\end{thebibliography}
\end{document}